\newcommand{\pg}{\mathrm{PG}}
\newcommand{\pgl}{\mathrm{PGL}}
\newcommand{\ag}{\mathrm{AG}}
\theoremstyle{plain} \numberwithin{equation}{section}
\newtheorem{theorem}{Theorem}[section]
\newtheorem{lemma}[theorem]{Lemma}
\theoremstyle{definition}
\newtheorem{definition}[theorem]{Definition}
\begin{document}
\bibliographystyle{plain-annote}
\title{Cameron-Liebler line classes}

\author{Morgan Rodgers
  \thanks{This research was supported in part by NSF \#0742434 UCD GK12 Transforming Experiences Project.}\\
  University of Colorado Denver\\
  Department of Mathematical and Statistical Sciences\\
  morgan.rodgers@ucdenver.edu
}
\date{July 1, 2013}

\maketitle

\begin{abstract}

New examples of Cameron-Liebler line classes in $\pg(3,q)$ are given with parameter  $\frac{1}{2}(q^2 -1)$.
These examples have been constructed for many odd values of $q$ using a computer search, by forming a union
of line orbits from a cyclic collineation group acting on the space. While there are many equivalent
characterizations of these objects, perhaps the most significant is that a set of lines $\mathcal{L}$ in $\pg(3,q)$
is a Cameron-Liebler line class with parameter $x$ if and only if every spread $\mathcal{S}$ of the space
shares precisely $x$ lines with $\mathcal{L}$. These objects are related
to generalizations of symmetric tactical decompositions of
$\pg(3,q)$, as well as to subgroups of $\pgl(4,q)$ having equally many orbits on points and lines of $\pg(3,q)$.
Furthermore, in some cases the line classes we construct are related to two-intersection sets in $\ag(2,q)$.
Since there are very few known examples of these sets for $q$ odd, any new results in this direction are of
particular interest.


\end{abstract}

\section{Introduction}\label{intro}
Cameron and Liebler~\cite{CL} studied sets of lines in $\pg(3,q)$ having certain nice properties, today known as
\emph{Cameron-Liebler line classes}.  Such a set contains $x(q^2+q+1)$ lines for some integer $x$, and among many
equivalent properties, shares with every spread of $\pg(3,q)$ precisely $x$ lines; $x$ is called the \emph{parameter}
of the set.  There are some trivial examples of Cameron-Liebler line classes; if we take all of the lines through a
common point, or all of the lines in a common plane, we get a line class with parameter 1.  If we take a
non-incident point-plane pair, and take all of the lines through the point along with all of the lines in the plane,
we get a line class having parameter 2.  Also, it is easy to see that the complement of a Cameron-Liebler line class
with parameter $x$ is a line class with parameter $q^2 +1 -x$.

While it was originally conjectured by Cameron and Liebler that only the trivial examples of these line classes
would exist, this was disproved by an example due to Drudge in~\cite{D}, which was shortly thereafter shown to be
part of an infinite family of examples by Bruen and Drudge~\cite{BD}. These examples exist for all odd values of
$q$, and have $x = \frac{1}{2}(q^2+1)$.  The first counterexample to the conjecture for an even value of $q$ was
given by Goevarts and Penttila in~\cite{GP}, having $q = 4$ and $x = 7$.  There has been much work excluding certain
parameters for Cameron-Liebler line classes;
Govaerts and Storme~\cite{GS} first showed that there are no Cameron-Liebler line classes in $\pg(3,q)$ with
parameter $2 < x \leq q$ when $q$ is prime.  Some time later, De Beule, Hallez, Storme~\cite{BHS} excluded
parameters $2 < x \leq q/2$ for all values of $q$.  Most recently, Metsch~\cite{M} eliminated the possibility of
having $2 < x \leq q$ for any prime power $q$.

In this work, we detail new Cameron-Liebler line classes constructed for many odd values of $q$ satisfying
$q \equiv 1 \bmod{4}$ and $q \not\equiv 1 \bmod{3}$, having parameter $x = \frac{1}{2}(q^2 -1)$.  These new examples
are made up of a union of orbits of a cyclic collineation group having order $q^2 +q +1$.  A Cameron-Liebler line
class with parameter $x$ Klein-corresponds to an $x$-tight set of points in $Q^{+}(5,q)$~\cite{DT}; we can use this
model to easily test if a given set of points in $Q^{+}(5,q)$ corresponds to a Cameron-Liebler line class using an
eigenvector condition.  Among these new examples, when $q \equiv 0 \bmod{3}$ we seem to have some especially
interesting behavior; in these cases, the Cameron-Liebler line class constructed gives rise to a symmetric tactical
decomposition of $\pg(3,q)$, which can be used to derive a set of type $(m,n)$ in the affine plane $\ag(2,q)$.  Our
hope is that these new Cameron-Liebler line classes will belong to an infinite family, and that there will also be
an infinite family of related affine sets of type $(m,n)$.  Affine sets of type $(m,n)$ have previously only been
found in planes of order $9$, so the discovery of an infinite family of new examples would hold particular interest.

\section{Cameron-Liebler line classes (tight sets of $Q^{+}(5,q)$)}\label{sec:1}
\begin{definition}
  A \textbf{Cameron-Liebler line class}  $\mathcal{L}$ is a set of lines in $\pg(3,q)$ such that  any line $\ell$ of
  $\pg(3,q)$ is incident with
  \begin{equation*}
    |\{\ m \in \mathcal{L} : \ \mbox{$m$ meets $\ell$, $m \neq \ell$} \}| =
    \left\lbrace
      \begin{array}{ll}
        (q+1)x + (q^2 -1) & \mbox{ if $\ell \in \mathcal{L}$}\\
        (q+1)x & \mbox{ if $\ell \not\in \mathcal{L}$}
      \end{array}
    \right.
  \end{equation*}
  for a fixed integer $x$, called the \emph{parameter} of $\mathcal{L}$.
\end{definition}
There are many other equivalent characterizations of these sets of lines; for an extensive list see~\cite{Pe}.  The
lines of $\pg(3,q)$ Klein-correspond to points of $Q^{+}(5,q)$~\cite{DT}.  The Klein correspondence is a bijection
(which we shall always denote by $\kappa$) between the set of lines of $\pg(3, q)$ and the set points of
$Q^{+}(5, q)$, such that lines $\ell$ and $\ell^{\prime}$ of $\pg(3, q)$ are incident if and only if $\kappa(\ell)$
and $\kappa(\ell^{\prime})$ are collinear.  In this context, we will be interested in \emph{tight sets} of points.
The notion of an $i$-tight set of a finite generalized quadrangle was introduced by Payne in~\cite{P}, and was
extended to polar spaces of higher rank by Drudge in~\cite{DT}.
\begin{definition}
  A set of points $\mathcal{T}$ in $Q^{+}(5,q)$ is said to be \textbf{$i$-tight} if
  \begin{equation*}
    \left| P^{\perp} \cap \mathcal{T} \right| = \left\lbrace
      \begin{array}{ll}
        i (q+1) + q^{2} & \mbox{ if } P \in \mathcal{T} \\
        i (q+1)             & \mbox{ if } P \not\in \mathcal{T} \mbox{.}
      \end{array}
    \right.
  \end{equation*}
\end{definition}
It is easy to see that a Cameron-Liebler line class of $\pg(3,q)$ with parameter $x$ Klein-corresponds to an
$x$-tight set of $Q^{+}(5,q)$.

We now describe the model of the Klein quadric we will be using.
Let $F = GF(q)$, $E = GF(q^3)$, and $T$ be the relative trace function from $E$ to $F$.   We consider the quadric
$Q^{+}(5,q)$ to have the underlying vector space $V = E^2$ considered as a vector space over $F$, and equipped with
the quadratic form $Q : (x,y) \to T(xy)$.  The polar form of $Q$ is given by
$f((u_{1},u_{2}), \ (v_{1}, v_{2})) = T(u_{1}v_{2}) + T(u_{2}v_{1})$.  This form is nondegenerate, and it can be
seen that $\pi_{1} = \{(x,0) : \ x \in E^{*}\}$ and $\pi_{2} = \{(0,y) : \ y \in E^{*}\}$ are totally singular
planes in the quadric.

We will be using the orbits of a cyclic group to construct our Cameron-Liebler line classes.  Take $\mu \in E^{*}$
with $|\mu| = q^2 +q + 1$.  Define the map $g$ on $Q^{+}(5,q)$ by
$g: \langle (x,y) \rangle \to \langle (\mu x, \mu^{-1} y) \rangle$.  Now $g$ is a projective isometry, and the group
$C = \langle g \rangle$  can be seen to have $|C| = q^2 +q +1$. This group also stabilizes the planes $\pi_{1}$ and
$\pi_{2}$.
\begin{theorem}
If $q \not\equiv 1 \bmod{3}$, the group $C$ acts semi-regularly on the points of $Q^{+}(5,q)$.
\end{theorem}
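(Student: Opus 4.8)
The plan is to establish directly that the stabilizer in $C$ of every point of $Q^{+}(5,q)$ is trivial, which is exactly the assertion that $C$ acts semi-regularly (equivalently, that every $C$-orbit has the full length $q^{2}+q+1$). Since $g$ is a projective isometry with $g^{k}:\langle(x,y)\rangle\mapsto\langle(\mu^{k}x,\mu^{-k}y)\rangle$, a point $P=\langle(x,y)\rangle$ is fixed by $g^{k}$ precisely when $(\mu^{k}x,\mu^{-k}y)$ is a scalar multiple of $(x,y)$. Because the points of the quadric are $1$-dimensional subspaces of $V=E^{2}$ over $F=GF(q)$, this scalar must lie in $F^{*}$; so the fixed-point condition reads: there is some $\lambda\in F^{*}$ with $\mu^{k}x=\lambda x$ and $\mu^{-k}y=\lambda y$. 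I would then dispose of this condition by a short case analysis according to which of $x,y$ vanish.

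First I would treat the two ``planar'' cases. If $y=0$ (so $P\in\pi_{1}$) the condition collapses to $\mu^{k}=\lambda\in F^{*}$; if $x=0$ (so $P\in\pi_{2}$) it collapses to $\mu^{-k}=\lambda\in F^{*}$. In the remaining case $x,y\neq 0$ one obtains $\mu^{k}\in F^{*}$ and $\mu^{-k}\in F^{*}$ simultaneously. In every case the fixed-point condition forces the power $\mu^{k}$ (or its inverse) to lie in the subfield $F$. Thus the whole statement reduces to a single group-theoretic claim: the cyclic group $\langle\mu\rangle$ of order $q^{2}+q+1$ meets $F^{*}$ only in the identity.

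The heart of the argument, and the only place where the hypothesis $q\not\equiv 1\pmod 3$ is used, is this last claim. I would prove it by computing the order of the intersection $\langle\mu\rangle\cap F^{*}$; since both are subgroups of the cyclic group $E^{*}$ of order $q^{3}-1=(q-1)(q^{2}+q+1)$, this intersection is cyclic of order $\gcd(q^{2}+q+1,\,q-1)$. Using the identity $q^{2}+q+1=(q-1)(q+2)+3$, one sees that $\gcd(q^{2}+q+1,\,q-1)=\gcd(3,\,q-1)$. Hence the intersection is trivial exactly when $3\nmid(q-1)$, i.e.\ when $q\not\equiv 1\pmod 3$; in that case the only power $\mu^{k}$ lying in $F^{*}$ is $\mu^{0}=1$.

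Combining these observations, whenever $q\not\equiv 1\pmod 3$ the fixed-point condition forces $\mu^{k}=1$, hence $k\equiv 0\pmod{q^{2}+q+1}$, so no non-identity element of $C$ fixes a point of $Q^{+}(5,q)$. I do not anticipate a serious obstacle: the case analysis is routine, and the quadric condition $T(xy)=0$ is not even needed, so the same argument in fact shows that $C$ is semi-regular on all of $\pg(5,q)$. The one point demanding care is the elementary number theory isolating when $\gcd(3,q-1)=1$, which is precisely the stated congruence on $q$; getting this exactly right is what pins down the role of the hypothesis.
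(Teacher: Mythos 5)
Your argument is correct and is essentially the paper's own proof, written out in more detail: both reduce the fixed-point condition to $\mu^{k}\in F$, and both conclude via $\gcd(q^{2}+q+1,\,q-1)=\gcd(3,\,q-1)=1$ that $\mu^{k}=1$. The paper simply omits the routine case analysis and the explicit subgroup-intersection computation that you spell out.
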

\begin{proof}
Notice that $g^{i}\langle (x,y) \rangle = \langle (x,y) \rangle$ implies that $\mu^{i} \in F$.
But $(q^2 +q +1, q-1) = (q-1, 3) = 1$, since $q \not\equiv 1 \bmod{3}$.  Thus we have that $\mu^{i} = 1$, and so
$g^{i}$ is the identity map.
\end{proof}
This model of the Klein quadric and this cyclic group are very nice to work with algebraically; they were used
in~\cite{PW} to construct cyclic parallelisms.  For us, since each orbit has size $q^2 +q +1$, taking $x$ orbits
gives the right amount of points to have an $x$-tight set, so we will seek to combine these orbits in an appropriate
way so that the proper conditions are satisfied.

\section{New line classes}\label{sec:2}
The main tool we will use to find tight sets of $Q^{+}(5,q)$ is an eigenvector condition given in~\cite{BKLP}.
\begin{theorem}
Let $A$ be the collinearity matrix of $Q^{+}(5,q)$ and let $\mathcal{L}$ be an $x$-tight set with characteristic
vector $\chi$.  Then
\begin{equation}
\left(\chi - \frac{x}{q^2 + 1}\mathbf{j}\right)
\end{equation}
is an eigenvector for $A$ with eigenvalue $q^{2} - 1$, where $\mathbf{j}$ is the vector consisting of all ones.
\end{theorem}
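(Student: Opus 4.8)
The plan is to compute the action of $A$ on $\chi$ directly from the tight-set definition and then correct by a suitable multiple of $\mathbf{j}$. The one point that needs care at the outset is the diagonal: the collinearity matrix $A$ records \emph{distinct} collinear pairs, so its diagonal is zero and $(A\chi)_P$ counts the points of $\mathcal{T}$ lying in $P^{\perp}\setminus\{P\}$, whereas the defining count $|P^{\perp}\cap\mathcal{T}|$ includes $P$ itself whenever $P\in\mathcal{T}$. I would also record at once that the collinearity graph of $Q^{+}(5,q)$ is regular, since the isometry group is transitive on points. To pin down its valency cleanly I would apply the tight-set definition to the full point set, which is $(q^2+1)$-tight: this gives $|P^{\perp}\cap Q^{+}(5,q)| = (q^2+1)(q+1)+q^2$, hence the valency is $k = q(q+1)^2$ and $A\mathbf{j} = k\mathbf{j}$.

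First I would evaluate $(A\chi)_P$ in the two cases of the definition. If $P\notin\mathcal{T}$ then no correction is needed and $(A\chi)_P = x(q+1)$; if $P\in\mathcal{T}$ then subtracting the contribution of $P$ itself gives $(A\chi)_P = x(q+1)+q^2-1$. These two cases combine into the single identity
\begin{equation*}
A\chi = (q^2-1)\,\chi + x(q+1)\,\mathbf{j},
\end{equation*}
where the term $(q^2-1)\chi$ arises precisely from the $-1$ bookkeeping correction on the support of $\mathcal{T}$. This is the crux of the argument, and it is exactly where the eigenvalue $q^2-1$ is produced.

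It then remains to combine the two computations. Using $A\mathbf{j} = k\mathbf{j}$ with $k = q(q+1)^2$, I obtain
\begin{equation*}
A\!\left(\chi - \frac{x}{q^2+1}\mathbf{j}\right)
  = (q^2-1)\,\chi + x(q+1)\,\mathbf{j} - \frac{x\,q(q+1)^2}{q^2+1}\,\mathbf{j},
\end{equation*}
and I would check that the coefficient of $\mathbf{j}$ equals $-\tfrac{x(q^2-1)}{q^2+1}$, which reduces to the routine identity $(q+1)(q^2+1)-q(q+1)^2 = -(q^2-1)$. This exhibits the right-hand side as $(q^2-1)\bigl(\chi - \tfrac{x}{q^2+1}\mathbf{j}\bigr)$, completing the proof. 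I do not expect a genuine obstacle here: the only place to stumble is the off-by-one bookkeeping on the diagonal together with the correct value of the valency $k$, and once these two quantities are fixed the remainder is a one-line linear-algebra computation.
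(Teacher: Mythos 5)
Your proof is correct and is exactly the computation the paper has in mind: the paper states only that the theorem ``follows directly from our characterization of $i$-tight sets,'' and your argument is the direct verification of that claim, with the diagonal correction $(A\chi)_P = |P^{\perp}\cap\mathcal{T}| - \chi_P$ and the valency $k = q(q+1)^2$ handled properly. The only cosmetic quibble is that invoking the $(q^2+1)$-tightness of the full point set to obtain $k$ is logically equivalent to just computing $|P^{\perp}\cap Q^{+}(5,q)| = 1 + q(q+1)^2$ directly, but the value is standard and correct, so there is no gap.
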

This theorem follows directly from our characterization of $i$-tight sets.

We construct new examples of Cameron-Liebler line classes using the computational software MAGMA~\cite{MAG}.  We
begin by taking a distinguished plane $\pi$ in $\pg(3,q)$ along with a distinguished point $P \not\in \pi$.  Thus
$\mbox{line}(\pi) \cup \mbox{star}(P)$ is a Cameron-Liebler line class of parameter $2$, and the complement of this
set is a Cameron-Liebler line class with parameter $q^2 -1$.  Working in the Klein quadric $Q^{+}(5,q)$ as detailed
in the previous section, we can assume that $\mbox{line}(\pi)$ Klein-corresponds to
$\pi_{1} = \{(x,0) : \ x \in E^{*}  \}$ and
$\mbox{star}(P)$ corresponds to $\pi_{2} = \{(0,y) : \ y \in E^{*}  \}$.  Defining our group $C$ as before, and
requiring that $q \not\equiv 1 \bmod{3}$, we have $\pi_{1}$ and $\pi_{2}$ as orbits of $C$, as well as $q^2 -1$
other orbits each having size $q^2 +q +1$.  Our wish is to split these other orbits in half in such a way that we
obtain two Cameron-Liebler line classes $\mathcal{L}_{1}$ and $\mathcal{L}_{2}$ each having parameter
$\frac{1}{2}(q^2 -1)$.

The difficulty in searching for these line classes is twofold.  First, constructing the collinearity matrix $A$ of
$Q^{+}(5,q)$ is very time consuming.  Second, combining orbits through brute force is not at all computationally
efficient, and searching the eigenspace of $A$ corresponding to $(q^2 -1)$ for eigenvectors of the appropriate form
is not much better.  To circumvent this first problem, we avoid constructing the entire collinearity matrix of
$Q^{+}(5,q)$.  Instead, we use the following result~\cite{God}:
\begin{lemma}
  Suppose $A$ can be partitioned as
  \begin{equation}
    A =
    \left[
      \begin{array}{ccc}
        A_{11}   & \cdots & A_{1k} \\
        \vdots & \ddots & \vdots \\
        A_{k1}   & \cdots & A_{kk}
      \end{array}
    \right]
  \end{equation}
with each $A_{ii}$ square, $1 \leq i \leq k$, and each $A_{ij}$ having constant row sum $b_{ij}$.  Then any
eigenvalue of the matrix $B = (b_{ij})$ is also an eigenvalue of $A$.
\end{lemma}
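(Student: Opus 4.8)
The plan is to produce an explicit intertwining relation between $A$ and the quotient matrix $B$, and then transport eigenvectors of $B$ up to eigenvectors of $A$. Write $V_1, \dots, V_k$ for the blocks of the partition of the common row/column index set, so that $A_{ij}$ is the submatrix of $A$ with rows indexed by $V_i$ and columns indexed by $V_j$; the assumption that each $A_{ii}$ is square is exactly what guarantees the rows and columns are partitioned in the same way, so that a single indicator object sees both. Let $n$ be the total number of rows. First I would introduce the $n \times k$ characteristic matrix $S = (S_{pj})$ defined by $S_{pj} = 1$ if row $p$ lies in $V_j$ and $S_{pj} = 0$ otherwise, so that the $j$-th column of $S$ is the indicator vector of the part $V_j$.

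The central step is to verify the identity $AS = SB$ entrywise. For a row $p \in V_i$, the $(p,j)$ entry of $AS$ is $\sum_{q \in V_j} A_{pq}$, which is precisely the sum of the entries of row $p$ within the block $A_{ij}$. By the hypothesis that every block $A_{ij}$ has constant row sum $b_{ij}$, this quantity equals $b_{ij}$ independently of which $p \in V_i$ we chose. On the other side, the $(p,j)$ entry of $SB$ is $\sum_{r=1}^{k} S_{pr}\,b_{rj}$, and since $p \in V_i$ forces $S_{pr} = 1$ exactly when $r = i$, this collapses to $b_{ij}$ as well. Hence the two products agree in every entry and $AS = SB$.

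With the intertwining relation established, the conclusion follows quickly. Suppose $\lambda$ is an eigenvalue of $B = (b_{ij})$ with eigenvector $v \neq 0$, so $Bv = \lambda v$. Applying $S$ on the left and using $AS = SB$ gives $A(Sv) = S(Bv) = \lambda(Sv)$, so $Sv$ is an eigenvector of $A$ with eigenvalue $\lambda$, provided $Sv \neq 0$.

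The only point requiring genuine care — and the main (small) obstacle — is precisely ruling out $Sv = 0$. I would handle this by noting that the columns of $S$ are the indicator vectors of the disjoint, nonempty parts $V_1, \dots, V_k$; because these have pairwise disjoint supports they are linearly independent, so $S$ has full column rank $k$ and is therefore injective. Consequently $v \neq 0$ forces $Sv \neq 0$, which finishes the argument. Everything else is the routine bookkeeping of the two entrywise computations above, so I expect no further difficulty beyond keeping the indexing of the parts consistent.
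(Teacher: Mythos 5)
Your proof is correct and complete: the intertwining identity $AS=SB$ for the characteristic matrix $S$ of the partition, together with the observation that $S$ has full column rank (so $Sv\neq 0$ whenever $v\neq 0$), is exactly the standard equitable-partition argument. The paper itself offers no proof of this lemma, simply citing Godsil for it, and your argument is the one found in that source; the only hypothesis you implicitly use is that each part $V_i$ is nonempty (so the columns of $S$ are nonzero), which is the natural reading of the lemma, so there is nothing to add.
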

The nice thing about applying this lemma is that we can easily construct an eigenvector of $A$ from an eigenvector
of $B$.  By using the point orbits of a group which we assume stabilizes the Cameron-Liebler line class we wish to
construct to partition the points of $Q^{+}(5,q)$, we can construct a matrix $B$ more easily than constructing $A$,
and an appropriate eigenvector of $B$ corresponding to the eigenvalue $(q^2-1)$ will give rise to a tight point set
of $Q^{+}(5,q)$.

Let $q \equiv 1 \bmod{4}$ and $q \not\equiv 1 \bmod{3}$.  Assume we have an $\frac{1}{2}(q^2-1)$-tight set
$\mathcal{L}_{1}$ of $Q^{+}(5,q)$, disjoint from $\pi_{1}$ and $\pi_{2}$, which is stabilized by $C$ as well as the
maps $\sigma : (x,y) \mapsto (x^q, y^q)$ and $\theta: (x,y) \mapsto (x, \omega^4 y)$, where
$\langle \omega \rangle = F^{*}$.  We should notice that
$\mathcal{L}_{2} = Q^{+}(5,q) \setminus (\pi_{1} \cup \pi_{2} \cup \mathcal{L}_{1})$ is also a
$\frac{(q^2-1)}{2}$-tight set.  By using the orbits of the group
$G = C\langle \sigma \rangle \langle \theta \rangle$ (which has order $\frac{3}{4}(q-1)(q^2+q+1)$) to partition the
points of $Q^{+}(5,q)$, we obtain a matrix $B$ that is $\frac{(q+1)}{3} \times \frac{(q+1)}{3}$ when
$q \equiv 2 \bmod{3}$, and $(\frac{q}{3}+1)\times(\frac{q}{3}+1)$ when $q \equiv 0 \bmod{3}$.  We have applied this
technique for all prime powers $q < 200$ satisfying $q \equiv 1 \bmod{4}$ and $q \not\equiv 1 \bmod{3}$, and in each
case we are able find eigenvectors corresponding to $\frac{(q^2-1)}{2}$-tight sets (one unique example up to
isomorphism for each $q$). In each case, $G$ ends up being the full stabilizer of the constructed set.\\

\section{Affine two-intersection sets}\label{sec:4}
A set of type $(m,n)$ in a projective or affine plane is a set $\mathcal{K}$ of points such that every line of the
plane contains either $m$ or $n$ points of $\mathcal{K}$;  we require that $m < n$, and we want both values to
occur.  For projective planes, there are many examples of these types of sets with $q$ both even and odd.  However,
the situation is quite different for affine planes.  When $q$ is even, we obtain a set of type $(0,2)$ in $\ag(2,q)$
from a hyperoval of the corresponding projective plane, and similarly a set of type $(0,n)$ from a maximal arc.
Examples of sets of type $(m,n)$ in affine planes of odd order, on the other hand, are extremely scarce.  The only
previously known examples exist in affine planes of order $9$~\cite{PR}, where we have sets of type $(3,6)$
containing either $36$ or $45$ points (these are complementary cases).  These examples were found by a computer
search.

A Cameron-Liebler line class $\mathcal{L}_{1}$ of parameter $40$ in $\pg(3,9)$ constructed in the previous section
induces a symmetric tactical decomposition on the space having four classes of points and lines. The four line
classes are $\mbox{line}(\pi)$, $\mbox{star}(P)$, $\mathcal{L}_{1}$, $\mathcal{L}_{2}$, where
$\mathcal{L}_{2} = Q^{+}(5,q) \setminus (\pi_{1} \cup \pi_{2} \cup \mathcal{L}_{1})$, and the four point classes are
$\pi$, $P$, and two others, $\mathcal{P}_{1}$ and $\mathcal{P}_{2}$.  $\mathcal{P}_{1}$ and $\mathcal{P}_{2}$ are
obtained as follows: each point in the space, excluding $P$ and the points of $\pi$, lies on either $30$ or $60$
lines of $\mathcal{L}_{1}$.  We define $\mathcal{P}_{1}$ to be the set of points on $30$ lines of $\mathcal{L}_{1}$,
and $\mathcal{P}_{2}$ to be the set of points on $60$ lines of $\mathcal{L}_{1}$.  Now, if we take a plane
$\pi^{\prime}$ of $\pg(3,9)$ not equal to $\pi$, and not containing $P$, $\pi^{\prime}$ contains precisely one line
of $\mbox{line}(\pi)$, no lines of $\mbox{star}(P)$, and either $30$ or $60$ lines of $\mathcal{L}_{1}$, so $60$ or
$30$ lines of $\mathcal{L}_{2}$.  WLOG we may assume that $\pi^{\prime}$ contains $30$ lines of $\mathcal{L}_{1}$
and $60$ lines of $\mathcal{L}_{2}$.  $\pi^{\prime}$ also contains $(q+1)$ points of $\pi$, and, under our
assumptions, $30$ points of $\mathcal{P}_{1}$ and $60$ points of $\mathcal{P}_{2}$.  In fact, this is a symmetric
tactical decomposition of $\pi^{\prime}$ having $3$ classes on points and lines.  Finally, by taking
$\pi^{\prime} \cap \pi$ to be the line at $\infty$, we derive the affine plane $\ag(2,9)$.  It can be easily verified
that $\pi^{\prime} \cap \mathcal{P}_{1}$ is a set of size $30$ in $\ag(2,9)$ that is of type $(3,6)$.  As the sets of
type $(m,n)$ in $\ag(2,9)$ were completely classified in~\cite{PR}, this set is not new.  However, following this
same procedure with our Cameron-Liebler line class of parameter $3280$ in $\pg(3,81)$ yields an affine set of type
$(36, 45)$ in $\ag(2,81)$, which is new.  Our hope is that, if related Cameron-Liebler line classes exist in
$\pg(3, 3^{2e})$ for all $e$, they will always give rise to sets of type $(m,n)$ in $\ag(2, 3^{2e})$.

\bibliography{CLlineclasses}

\end{document}